\documentclass[11pt,a4paper]{amsart}
\usepackage{amsfonts,amssymb}
\usepackage{tikz}
\usetikzlibrary{matrix,arrows,calc}
\usepackage{graphicx,mathrsfs,ifpdf}
\usepackage{url}

\newtheorem{theorem}{Theorem}
\newtheorem{lemma}[theorem]{Lemma}

\newtheorem{conjecture}{Conjecture}

\theoremstyle{definition}

\numberwithin{equation}{section}
\numberwithin{figure}{section}

\DeclareMathOperator{\diam}{diam}
\DeclareMathOperator{\dist}{dist}

\DeclareMathOperator{\RR}{\mathbb R}

\ifpdf
\DeclareGraphicsRule{*}{mps}{*}{}
\usepackage[pdftex]{hyperref}
\else
\newcommand{\href}[2]{#2}
\fi

\usepackage{chngcntr}
\counterwithout{figure}{section}

\newenvironment{tikzgraph}
  {\begin{tikzpicture}
      [vertex/.style={circle, draw=black, fill, inner sep=0mm, minimum size=3pt},edge/.style={semithick},
       subdivision/.style={circle, draw=black, fill=white, inner sep=0mm, minimum size=3pt},edge/.style={semithick}]\begin{scope}}
  {\end{scope}\end{tikzpicture}}

\begin{document} %\onehalfspacing
\title[Fullerene graphs of small diameter]{Fullerene graphs of small diameter}
\author{Diego Nicodemos}
\thanks{Partially supported by CAPES and CNPq.}
\address{Col\'egio Pedro II, COPPE/Sistemas, Universidade Federal do Rio
  de Janeiro, Brazil}
\email{nicodemos@cos.ufrj.br}
\author{Mat\v ej Stehl\'ik}
\thanks{Partially supported by ANR project Stint (ANR-13-BS02-0007) and by
LabEx PERSYVAL-Lab (ANR-11-LABX-0025).}
\address{Laboratoire G-SCOP, Universit\'e Grenoble Alpes, France}
\email{matej.stehlik@grenoble-inp.fr}
\date{}

\begin{abstract}
A fullerene graph is a cubic bridgeless plane graph with only pentagonal and
hexagonal faces. We exhibit an infinite family of fullerene graphs of diameter
$\sqrt{4n/3}$, where $n$ is the number of vertices. This disproves a
conjecture of Andova and \v Skrekovski [MATCH Commun. Math. Comput. Chem.
70 (2013) 205--220], who conjectured that every fullerene graph on $n$ vertices
has diameter at least $\lfloor \sqrt{5n/3}\rfloor-1$.
\end{abstract}

\maketitle
 
\section{Introduction}

\emph{Fullerene graphs} are cubic bridgeless plane graphs with only pentagonal and
hexagonal faces. Their beautiful structure---along with the fact that they can
serve as models for fullerene molecules---have attracted many researchers, and there
is now a wide body of literature on the various properties and parameters of
fullerene graphs.

One parameter that has received relatively little attention is the \emph{diameter},
defined for a graph $G$ as the maximum distance between two vertices of $G$, and denoted
by $\diam(G)$. Andova et al.~\cite{ADKLS12} have shown that if $G$ is a fullerene graph
on $n$ vertices, then $\diam(G) \geq \sqrt{2n/3-5/18}-\frac12$. In a subsequent paper,
Andova and \v Skrekovski~\cite{AndSkr13} studied the diameter of fullerene graphs with
full icosahedral symmetry. Believing that these fullerene graphs minimise the diameter,
they proposed the following conjecture.

\begin{conjecture}[Andova and \v Skrekovski~\cite{AndSkr13}]
\label{conj:diameter}
  If $G$ is any fullerene graph on $n$ vertices, then
  $\diam(G) \geq \lfloor \sqrt{5n/3}\rfloor-1$.
\end{conjecture}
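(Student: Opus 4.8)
To refute Conjecture~\ref{conj:diameter} it is enough to construct, for infinitely many $n$, a fullerene graph $G$ on $n$ vertices with $\diam(G)<\lfloor\sqrt{5n/3}\rfloor-1$. The guiding principle is that the lower bound $\diam(G)\ge\sqrt{2n/3}$ of Andova et al.~\cite{ADKLS12} is essentially sharp for a \emph{flat} piece of honeycomb, where a graph ball of radius $r$ contains $\tfrac32 r^2+O(r)$ vertices. The icosahedral fullerenes motivating the conjecture are instead as \emph{round} as possible, spending diameter on curvature and reaching only $n\approx\tfrac35\diam(G)^2$. My plan is therefore to do the opposite and build fullerenes shaped like a coin rather than a ball, so that almost every vertex lies in one of two large flat honeycomb regions whose common radius controls the diameter.

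Concretely, I would take two congruent hexagonal honeycomb patches $T$ and $B$, each a graph ball of radius $r$ (so $\tfrac32 r^2+O(r)$ vertices), and identify them along their boundary cycles. The result is a $2$-sphere that is flat on its two faces and creased along the seam, and a short application of combinatorial Gauss--Bonnet shows that all of the positive curvature is forced into the six corners of the seam, each of which must absorb curvature $2\pi/3$, i.e.\ exactly two pentagons' worth; this accounts for $6\times 2=12$ pentagons, in perfect agreement with the fullerene constraint. I would realise this by inserting a fixed pentagonal gadget at each corner while keeping the two faces and the straight parts of the seam hexagonal. The first task is then to check that the resulting plane graph $G_r$ is a genuine fullerene---cubic, bridgeless, with faces only of sizes $5$ and $6$ and exactly twelve pentagons---and that the gadgets create no spurious short faces. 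A direct count gives an infinite family with $n=n(r)\sim 3r^2$.

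It remains to estimate $\diam(G_r)$. For the \emph{upper} bound I would note that every vertex lies within distance $r+O(1)$ of the seam and that two vertices of the same patch are within its diameter $2r+O(1)$; routing through a nearest seam vertex then gives $\diam(G_r)\le 2r+O(1)$. Together with $n(r)\sim 3r^2$ this already yields $\diam(G_r)\lesssim\sqrt{4n/3}<\sqrt{5n/3}-1$ for all large $r$, which suffices to disprove Conjecture~\ref{conj:diameter}. To obtain the clean asymptotics $\diam(G_r)\sim\sqrt{4n/3}$ stated in the abstract I would additionally prove the matching \emph{lower} bound, attained between the centres of the two faces.

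The step I expect to be genuinely delicate is this lower bound, since one must rule out shortcuts: a priori a geodesic between the two face-centres could dive across the seam and resurface far away, collapsing their distance. The crux is to show that distances in $G_r$ agree, up to an additive constant, with distances measured inside a single flat patch---equivalently, that crossing the seam and coming back never pays off. I would try to certify this with an explicit metric cut, or with a monotone height potential that strictly increases along every step leading away from a face-centre; controlling this potential across the six pentagonal corners, where the lattice is locally irregular, is where I expect the real work to lie.
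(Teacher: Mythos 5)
Your construction is, up to dualisation, exactly the paper's: the graph $D_{r,t}$ of Theorem~\ref{thm:main} \emph{is} two honeycomb discs glued along a hexagonal seam, with the twelve pentagons appearing in pairs at the six seam corners. The paper realises your unspecified ``pentagonal gadgets'' cleanly by working in the dual: two balls of radius $r$ in the triangular lattice are identified along their boundary $6r$-cycles with a cyclic twist $t$, $1\leq t\leq r-1$, which forces every seam vertex to have degree $5$ or $6$ (twist $0$ would identify two degree-$3$ corners, producing a degree-$4$ dual vertex, i.e.\ a square face in the fullerene --- so some offset is genuinely needed, and your gadget step is not automatic). You are also right, and in agreement with the paper, that no lower bound on the diameter is needed: the paper proves none, and Theorem~\ref{thm:main} is purely an upper bound.

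The genuine gap is in your diameter upper bound, which you dispatch in one sentence but which is where the entire proof lives. Routing through a \emph{nearest} seam vertex does not give $2r+O(1)$: if $A$ lies in the top patch and $B$ in the bottom one, the seam vertex $w$ nearest to $B$ satisfies $\dist(B,w)\leq r+O(1)$, but $\dist(A,w)$ can be as large as the top patch's diameter $2r+O(1)$ (and symmetrically), so this argument only yields $\diam(G_r)\leq 3r+O(1)$. With $n\sim 3r^2$ the conjectured bound is $\lfloor\sqrt{5n/3}\rfloor-1\approx\sqrt{5}\,r\approx 2.24\,r$, so a bound of $3r$ refutes nothing; you must choose the seam crossing adapted to \emph{both} endpoints. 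This is precisely what the paper's argument supplies: every non-pole vertex of the dual has a northern and a southern neighbour, so one can take two pole-to-pole meridian paths of dual length $2r$, one through an edge of the face $A$ and one through $B$ and its neighbouring face $B'$; their union is a closed walk of length $4r$, so one of the two arcs joining the relevant edges has length at most $\lfloor\frac12(4r-3)\rfloor=2r-2$, and Lemma~\ref{lem:diameter-dual} (the cut $\delta(P^*)$ around a dual path of length $k$ dualises to a cycle of length at most $4k+6$ through both vertices, giving $\dist_G(A,B)\leq 2\dist_{G^*}(u,v)+3$) converts this into $\dist(A,B)\leq 4r=\sqrt{4n/3}$. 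Finally, the step you flag as ``genuinely delicate'' --- the lower bound certifying that geodesics do not shortcut across the seam --- is superfluous for the stated goal and is indeed absent from the paper; the real delicacy sits in the upper bound you treated as routine.
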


The conjecture bears a striking resemblance to a famous conjecture in
differential geometry due to Alexandrov~\cite{Ale55}, which states that
$D \geq \sqrt{2A/\pi}$, for any closed orientable surface of area $A$ and intrinsic
diameter $D$. The bound in Alexandrov's conjecture is attained by the
doubly-covered disk, a degenerate surface formed by gluing two discs along
their boundaries. 

By a deep theorem of Alexandrov (see e.g.~\cite[Theorem 23.3.1]{DemORo07}
or~\cite[Theorem 37.1]{Pak10}), any fullerene graph can be embedded
in the surface $\partial P$ of a convex (possibly degenerate) polyhedron
$P \subset \RR^3$ so
that every face is isometric to a regular pentagon or a regular hexagon
with unit edge length, and this polyhedron is unique up to isometry of
$\RR^3$. (We should stress that the edges of the polyhedron may not
correspond to the edges of the graph.)
This allows us to view fullerene graphs as geometric objects, and to talk
about the `shape' of a fullerene graph.

The fullerene graphs with full icosahedral symmetry investigated by Andova and
\v Skrekovski~\cite{AndSkr13} have a rather `spherical' shape. However,
since the minimisers in Alexandrov's conjecture are doubly-covered discs,
it seems that fullerene graphs which minimise the diameter,
for a given number of vertices, ought to resemble a disc.
This led us to study the class of fullerene graphs which were called \emph{nanodiscs} by
Graver and Monacino~\cite{GraMon08}. We were able to show that they have diameter at most
$\sqrt{4n/3}$, thus disproving Conjecture~\ref{conj:diameter}. (The smallest counterexample
we found has $300$ vertices.)

\begin{theorem}
\label{thm:main}
  For every $r \geq 2$ and every $1 \leq t \leq r-1$, there exists a fullerene graph
  $D_{r,t}$ on $12r^2$ vertices of diameter at most $4r$. In particular,
  $\diam(D_{r,t})\leq\sqrt{4n/3}$.
\end{theorem}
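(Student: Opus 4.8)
The plan is to first dispose of the ``in particular'' clause, which is pure arithmetic: substituting $n=12r^2$ gives $\sqrt{4n/3}=\sqrt{16r^2}=4r$, so it suffices to construct, for each admissible pair $(r,t)$, a fullerene graph $D_{r,t}$ with exactly $12r^2$ vertices and $\diam(D_{r,t})\le 4r$.

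For the construction I would realise $D_{r,t}$ as a nanodisc in the literal geometric sense suggested by the discussion preceding the theorem. By Alexandrov's theorem a disc-shaped fullerene ought to embed as a doubly covered convex hexagon, with all of its curvature---and hence all $12$ pentagons---pushed to the rim. Concretely, I would take two congruent patches of the hexagonal (honeycomb) lattice, each shaped like a regular hexagon of side proportional to $r$, to serve as the top and bottom faces of the disc, and glue their boundary cycles together along an equatorial band. At each of the six corners of the hexagonal rim a fixed ``turning gadget'' of two pentagons supplies the required angle deficit: since the doubly covered regular hexagon carries total deficit $6\cdot 2\cdot 60^\circ=720^\circ$ and each pentagon contributes $60^\circ$, this accounts for exactly $2\cdot 6=12$ pentagons, every remaining face being a hexagon. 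The integer $t$ with $1\le t\le r-1$ records the rotational offset (the ``twist'') with which the bottom patch is glued to the top one; it yields genuinely distinct graphs without altering the face vector or the vertex count.

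I would then check that $D_{r,t}$ is a bona fide fullerene graph: it is cubic and plane by construction, it has only pentagons and hexagons (and exactly twelve of the former), and it is $3$-connected, hence in particular bridgeless---$3$-connectivity following from the fact that the two caps and the band are each internally well connected and are joined along a common cycle. A direct tally of the vertices of the two caps and the equatorial band, arranged so that the whole construction is governed by the single scale parameter $r$, yields the count $12r^2$; alternatively one reads this off from Euler's formula once the number of hexagonal faces is fixed.

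The heart of the matter, and the step I expect to be the main obstacle, is the bound $\diam(D_{r,t})\le 4r$. Geometrically it is very plausible: the doubly covered regular hexagon of the same area has circumradius $\sqrt3\,r$, so its intrinsic diameter is $2\sqrt3\,r\approx 3.46\,r$, comfortably below $4r$. Converting this into a statement about graph distance is the delicate part, since geodesics in the honeycomb lattice can exceed the length of Euclidean segments by a constant factor, and paths may have to detour around the pentagonal corner gadgets. I would therefore argue combinatorially: fix lattice coordinates on the two caps and, for every pair of vertices $u,v$, exhibit a path of length at most $4r$. When a shortest route stays within a single cap, its length is bounded by the cap's diameter, about $2\sqrt3\,r$; when the route must cross the equatorial band, one unfolds the two caps across the crossed rim edge and bounds the length by the corresponding straight-line distance plus a bounded correction for the corners and the twist $t$. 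Showing that the extremal pairs---roughly antipodal vertices---never force the length above $4r$, uniformly in $t$, is exactly where the constant in the theorem is earned.
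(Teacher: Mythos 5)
Your construction is essentially the paper's, seen from the primal side: the paper builds the dual $D^*_{r,t}$ by gluing two radius-$r$ balls of the triangular lattice along their boundary $6r$-cycles with offset $t$, which is exactly your ``two hexagonal caps glued along the rim with a twist'', the twelve pentagons being the dual faces of the twelve degree-$5$ vertices created at the six corners of each cap. But two things are asserted rather than proved. Minor: the vertex count $12r^2$ (a short Euler-formula computation: $D^*_{r,t}$ has $6r^2+2$ vertices, hence $12r^2$ triangles) and the role of the hypothesis $1\leq t\leq r-1$, which you treat as a mere labelling of ``distinct graphs''---in fact it is needed for correctness, since if $t$ were a multiple of $r$ two corners would be identified, producing a degree-$4$ dual vertex and hence a square face, and the result would not be a fullerene graph. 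Major: the bound $\diam(D_{r,t})\leq 4r$, which you yourself call the heart of the matter, is never established; what you give is a programme (fix coordinates, unfold across the rim, check extremal pairs uniformly in $t$) with no actual estimate. Moreover the heuristic meant to make the programme look safe is misleading: graph distance in the honeycomb lattice exceeds Euclidean distance by a factor of up to $2/\sqrt{3}$, and $2\sqrt{3}\,r\cdot(2/\sqrt{3})=4r$, so your ``comfortable'' margin $2\sqrt{3}\,r\approx 3.46\,r$ is in truth no margin at all---near-antipodal pairs push right up against $4r$, and a delicate case analysis (with corrections for the corner pentagons and the twist, which you only promise to control) would be unavoidable on this route.

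The idea your proposal is missing is the one that lets the paper sidestep all metric estimates. Work in the dual and define the latitude $\varphi(u)=r-\dist(n,u)$ (resp.\ $-r+\dist(s,u)$): every vertex of $D^*_{r,t}$ other than the poles has a neighbour of latitude one higher and one lower, so any prescribed vertex---indeed any prescribed edge or two-edge subpath---lies on a latitude-monotone pole-to-pole path of length exactly $2r$. Taking two such paths, through an edge of the face dual to $A$ and through a two-edge path straddling the faces dual to $B$ and its neighbour $B'$, yields a closed walk of length $4r$, whence a dual distance of at most $\lfloor(4r-3)/2\rfloor=2r-2$ between the relevant edges. The companion lemma $\dist_G(A,B)\leq 2\dist_{G^*}(u,v)+3$ (proved by cutting along a dual path and using that dual degrees are at most $6$) then converts this to $\dist_{D_{r,t}}(A,B)\leq 4r-1$ or $4r$, uniformly in $t$ and with no coordinates, unfolding, or extremal-pair analysis. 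Without this (or an equally effective substitute), your proposal proves the easy arithmetic reduction and sketches the construction, but not the theorem.
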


The graph $D_{r,t}$ is best defined using the planar dual graph.
Let $T$ be the (infinite) $6$-regular planar triangulation. Fix a vertex $u \in V(T)$,
and let $T_r(u)$ be the subgraph of $T$ induced by all the vertices at distance at
most $r$ from $u$. Let $C=(u_1,u_2, \ldots, u_{6r})$ be the outer cycle of $T_r(u)$;
clearly, $C$ has six vertices of degree $3$ (say $u_{kr}$, for $1\leq k \leq 6$),
and the other vertices $u_i$ all have degree $4$.

Let $r$ and $t$ be integers such that $r \geq 2$ and $1 \leq t \leq r-1$. Take two
copies $T_r(n)$ and $T_r(s)$ of the graph defined above (one with centre $n$ and the
other with centre $s$), and suppose $u_1, \ldots, u_{6r}$ is the outer cycle of
$T_r(n)$ and $v_1, \ldots, v_{6r}$ is the outer cycle of $T_r(s)$. The graph $D_{r,t}^*$
is obtained from the disjoint union of $T_r(n)$ and $T_r(s)$ by identifying the vertex
$u_i$ and $v_{i+t}$, for all $1 \leq i \leq 6r$ (the indices are taken modulo $6r$).

Clearly, $D_{r,t}^*$ is a planar triangulation, with all vertices of degree $5$ and $6$,
so the planar dual $D_{r,t}$ is a fullerene graph. Let us remark that the automorphism
group of $D_{r,t}$ is $D_6$, unless $r=2t$ in which case the automorphism group is $D_{6d}$.
The graphs $D^*_{2,1}$ and $D_{2,1}$ are shown in Figure~\ref{fig:D21}.

\begin{figure}
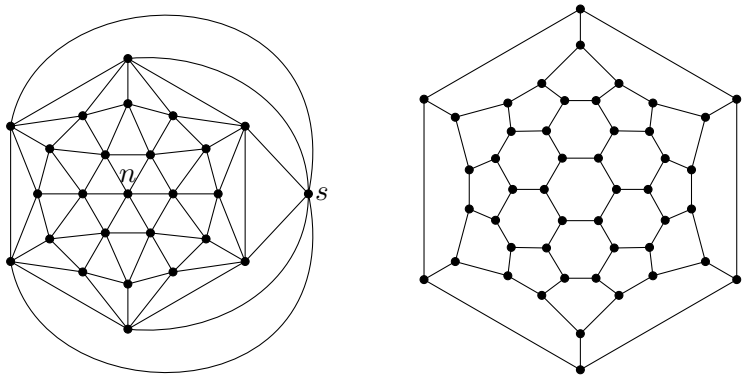

\centering
\begin{tikzgraph}[scale=0.6]
\draw[draw=none, use as bounding box](-3,-4) rectangle (4,4);
\foreach\i in {0,...,5}
{
  \path (60*\i:1) coordinate (a\i);
  \path (60*\i+30:3) coordinate (c\i);
}
\foreach\i in {0,...,11}
{
  \path (30*\i:2) coordinate (b\i);
}

\draw (a0)--(a1)--(a2)--(a3)--(a4)--(a5)--cycle
      (b0)--(b1)--(b2)--(b3)--(b4)--(b5)--(b6)--(b7)--(b8)--(b9)--(b10)--(b11)--cycle
      (c0)--(c1)--(c2)--(c3)--(c4)--(c5)--cycle
      (0,0)--(a0)
      (0,0)--(a1)
      (0,0)--(a2)
      (0,0)--(a3)
      (0,0)--(a4)
      (0,0)--(a5)
      (a0)--(b0)
      (a1)--(b2)
      (a2)--(b4)
      (a3)--(b6)
      (a4)--(b8)
      (a5)--(b10)
      (a0)--(b1)--(a1)--(b3)--(a2)--(b5)--(a3)--(b7)--(a4)--(b9)--(a5)--(b11)--cycle
      (b1)--(c0)
      (b3)--(c1)
      (b5)--(c2)
      (b7)--(c3)
      (b9)--(c4)
      (b11)--(c5)
      (b0)--(c0)--(b2)--(c1)--(b4)--(c2)--(b6)--(c3)--(b8)--(c4)--(b10)--(c5)--cycle
      (4,0)--(c0)
      (4,0) .. controls (4,2) and (2,3.2) .. (c1)
      (4,0) .. controls (5,5) and (-2,5) .. (c2)
      (4,0) .. controls (5,-5) and (-2,-5) .. (c3)
      (4,0) .. controls (4,-2) and (2,-3.2) .. (c4)
      (4,0)--(c5);

\draw (0,0) node[vertex] {};
\draw (4,0) node[vertex] {};
\foreach\i in {0,...,5}
{
  \draw (a\i) node[vertex] {};
  \draw (c\i) node[vertex] {};
}
\foreach\i in {0,...,11}
{
  \draw (b\i) node[vertex] {};
}
\draw (0,0.4) node {$n$};
\draw (4.3,0) node {$s$};
\end{tikzgraph}
\hfil
\begin{tikzgraph}[scale=0.6]
\foreach\i in {0,...,5}
{
  \path (60*\i:0.8) coordinate (a\i);
  \path (60*\i+30:4) coordinate (d\i);
}
\foreach\i in {1,4,...,16}
{
  \path (20*\i:2) coordinate (b\i);
}
\foreach\i in {2,5,...,17}
{
  \path (20*\i:2) coordinate (b\i);
}
\foreach\i in {1,4,...,16}
{
  \path (20*\i+10:3.2) coordinate (c\i);
}
\foreach\i in {0,3,...,15}
{
  \path (20*\i:1.5) coordinate (b\i);
}
\foreach\i in {0,3,...,15}
{
  \path (20*\i+10:2.5) coordinate (c\i);
}
\foreach\i in {2,5,...,17}
{
  \path (20*\i+10:2.5) coordinate (c\i);
}

\draw (a0)--(a1)--(a2)--(a3)--(a4)--(a5)--cycle
      (b0)--(b1)--(b2)--(b3)--(b4)--(b5)--(b6)--(b7)--(b8)--(b9)--(b10)--(b11)--(b12)--(b13)--(b14)--(b15)--(b16)--(b17)--cycle
      (c0)--(c1)--(c2)--(c3)--(c4)--(c5)--(c6)--(c7)--(c8)--(c9)--(c10)--(c11)--(c12)--(c13)--(c14)--(c15)--(c16)--(c17)--cycle
      (d0)--(d1)--(d2)--(d3)--(d4)--(d5)--cycle
      (a0)--(b0)
      (a1)--(b3)
      (a2)--(b6)
      (a3)--(b9)
      (a4)--(b12)
      (a5)--(b15)
      (b1)--(c0)
      (b2)--(c2)
      (b4)--(c3)
      (b5)--(c5)
      (b7)--(c6)
      (b8)--(c8)
      (b10)--(c9)
      (b11)--(c11)
      (b13)--(c12)
      (b14)--(c14)
      (b16)--(c15)
      (b17)--(c17)
      (d0)--(c1)
      (d1)--(c4)
      (d2)--(c7)
      (d3)--(c10)
      (d4)--(c13)
      (d5)--(c16)
;

\foreach\i in {0,...,5}
{
  \draw (a\i) node[vertex] {};
  \draw (d\i) node[vertex] {};
}
\foreach\i in {0,...,17}
{
  \draw (b\i) node[vertex] {};
  \draw (c\i) node[vertex] {};
}
\end{tikzgraph}
\caption{The plane triangulation $D^*_{2,1}$ and its dual fullerene graph $D_{2,1}$.}
\label{fig:D21}
\end{figure}

\section{The proof}

Our graph-theoretic terminology is standard and follows~\cite{BonMur08}.
To prove Theorem~\ref{thm:main}, we will make use of the following simple lemma,
which relates distances in a fullerene graph to distances in its dual graph.

\begin{lemma}
\label{lem:diameter-dual}
  Let $G$ be a fullerene graph and $G^*$ its dual graph. Fix any pair of vertices
  $A,B \in V(G)$, and let $u$ and $v$ be faces of $G$ incident to $A$ and $B$,
  respectively. Then $\dist_G(A,B) \leq 2\dist_{G^*}(u,v)+3$.
\end{lemma}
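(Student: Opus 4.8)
The plan is to convert a shortest path in the dual $G^*$ into a walk of controlled length in $G$. A shortest dual path $u = f_0, f_1, \dots, f_k = v$, where $k = \dist_{G^*}(u,v)$, is exactly a sequence of faces of $G$ in which consecutive faces $f_{i-1}$ and $f_i$ share an edge $e_i = f_{i-1} \cap f_i$; there are $k$ such shared edges $e_1, \dots, e_k$. The base case $k = 0$ is immediate: then $u = v$, so $A$ and $B$ both lie on a single pentagonal or hexagonal face, whose boundary is a cycle of length at most $6$, and hence $\dist_G(A,B) \le 3 = 2k+3$.

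The geometric heart of the argument is an elementary fact about a single face. Since each face of a fullerene graph is bounded by a cycle of length $5$ or $6$, we have: (i) any two of its vertices are at distance at most $3$; and (ii) for any vertex $x$ and any edge $e$ of the face, the nearer endpoint of $e$ is at distance at most $2$ from $x$ (the extremal case being a vertex and the `opposite' edge of a hexagon). All these distances are measured along the face boundary, so they are upper bounds for the corresponding distances in $G$.

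With this in hand I would build a walk visiting $A, w_1, w_2, \dots, w_k, B$ as follows. Starting from $A$ on $f_0$, let $w_1$ be the endpoint of $e_1$ nearest to $A$; by (ii), $\dist_G(A, w_1) \le 2$. Having reached the vertex $w_i$ (an endpoint of $e_i$, hence a vertex of $f_i$), let $w_{i+1}$ be the endpoint of $e_{i+1}$ nearest to $w_i$; since $e_{i+1}$ is also an edge of $f_i$, fact (ii) gives $\dist_G(w_i, w_{i+1}) \le 2$. Finally $w_k$ and $B$ both lie on $f_k = v$, so (i) gives $\dist_G(w_k, B) \le 3$. Concatenating shortest paths between consecutive vertices of the walk yields
\[ \dist_G(A,B) \le 2 + 2(k-1) + 3 = 2k+3, \]
as required.

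The one point that needs care — and where a naive attempt loses the constant — is the bookkeeping that produces $2k+3$ rather than, say, $2k+4$: the summand $3$ may be spent only once, at the very end, because $B$ is a prescribed vertex of $v$ that could be `antipodal' on a hexagon, whereas every intermediate hop is merely a hop to the \emph{nearest} endpoint of the next crossed edge and so costs at most $2$. The feature that makes the construction work without any global optimisation is precisely that fact (ii) holds for \emph{every} vertex $x$: the choice of $w_{i+1}$ depends only on $w_i$, so the selections decouple and can be made greedily along the path. I expect the only real obstacle to be verifying fact (ii), but this is just a small finite check on the pentagon and hexagon.
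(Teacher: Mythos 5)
Your proof is correct, but it takes a genuinely different route from the paper. You convert the shortest dual path $f_0,\dots,f_k$ into a walk in $G$ by a purely local, greedy argument: each crossing from one face to the next costs at most $2$ (your fact (ii), a finite check on the pentagon and hexagon), and only the final face costs $3$ (your fact (i)), giving $2+2(k-1)+3=2k+3$; your bookkeeping is sound, including the base case $k=0$ and the inductive invariant that $w_i$, being an endpoint of the shared edge $e_i$, is a vertex of $f_i$ so that fact (ii) applies to $w_i$ and $e_{i+1}$ on $f_i$. The paper instead argues globally via planar cut--cycle duality: it takes the edge cut $\delta(P^*)$ of the dual path, bounds $|\delta(P^*)|\leq 4k+6$ using that dual vertices have degree at most $6$, and observes that the edges of $G$ dual to this cut form a cycle of length at most $4k+6$ through both $A$ and $B$, whence $\dist_G(A,B)\leq 2k+3$. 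The paper's argument is shorter, and yields the slightly stronger conclusion that $A$ and $B$ lie on a common short cycle; but it quietly assumes that the boundary of the region covered by the faces of $P^*$ is a single cycle containing $A$ and $B$, which requires a word when, say, all three faces around $A$ belong to $V(P^*)$. Your walk construction is more elementary and sidesteps that topological subtlety entirely, at the cost of slightly more case-by-case bookkeeping; like the paper's proof, it uses only that every face has length at most $6$ (which is equivalent to the paper's dual degree bound), so both arguments extend verbatim to any plane graph with faces of length at most $6$.
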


\begin{proof}
  Let $k=\dist_{G^*}(u,v)$, and let $P^*$ be a path of length $k$ between $u$ and
  $v$ in the dual graph $G^*$. So $P^*=u_0,u_1,\ldots, u_k$, where $u_0=u$ and
  $u_k=v$. Let $\delta(P^*)$ be the edges with precisely one end vertex in $V(P^*)$.
  Since the vertices in $G^*$ have degree at most $6$, $|\delta(P^*)| \leq 6+4k$.
  Therefore the dual edges to the cut $\delta(P^*)$ form a cycle in $G$ of length
  at most $6+4k$ containing the vertices $A$ and $B$. Therefore, $\dist_G(A,B) \leq 2k+3$.
\end{proof}

Recall that the graph $D^*_{r,t}$ has two special vertices $n$ and $s$, which can be
thought of as the north and south poles. To continue the analogy with geography, we
shall define the \emph{latitude} $\varphi(u)$ of a vertex $u \in V(D^*_{r,t})$ as
\[
\varphi(u)=
\begin{cases}
  r-\dist(n,u)    & \text{ if } \dist(n,u)\leq r\\
  -r+\dist(s,u) & \text{ if } \dist(s,u)\leq r.  
\end{cases}
\]
In particular, $n$ has latitude $r$, $s$ has latitude $-r$, and vertices at distance $r$ from $n$
and $s$ have latitude $0$, i.e., they lie on the `equator'.

\begin{proof}[Proof of Theorem~\ref{thm:main}]
  Fix two vertices $A, B \in V(D_{r,t})$. For convenience of notation we will consider the dual
  graph $D^*_{r,t}$ as a pure $2$-dimensional simplicial complex. The vertices $A,B$ correspond
  to faces $A=\{u_1,u_2,u_3\}$ and $B=\{v_1,v_2,v_3\}$ in the dual triangulation $D^*_{r,t}$.
  Note that every face of $D^*_{r,t}$ is incident to two vertices at the same latitude, and
  another vertex at a different latitude.
  Assume without loss of generality that $\varphi(u_1)\neq \varphi(u_2)$, and
  $\varphi(v_1)\neq \varphi(v_2)=\varphi(v_3)$. Furthermore, let $B'=\{v_1',v_2,v_3\}$ be
  the (unique) face incident to $\{v_2,v_3\}$ which is different from $B$.

  There exists a path $P_u$ of length $2r$ from $n$ to $s$ containing the subpath $u_1,u_2$,
  and a path $P_v$ of length $2r$ from $n$ to $s$ containing the subpath $v_1,v_2,v_1'$.
  (To see this, it is enough to note that every vertex $v \in V(D^*_{r,t})\setminus\{n,s\}$ has
  a `northern' neighbour $v_N$ of latitude $\varphi(v_N)=\varphi(v)+1$ and a `southern' neighbour
  $v_S$ of latitude $\varphi(v_S)=\varphi(v)-1$.)
  The union $P_u \cup P_v$ is a closed walk $W$ of length $4r$, which must contain two subwalks
  $W_1,W_2$ from $\{u_1,u_2\}$ to $\{v_1,v_1'\}$, of length $\ell_1,\ell_2$, respectively.
  Without loss of generality assume that $\ell_1 \leq \ell_2$. Since $\ell_1+\ell_2 \leq 4r-3$,
  it follows that $\dist_{D^*_{r,t}}(\{u_1,u_2\},\{v_1,v_1'\}) \leq \ell_1 \leq \lfloor\frac12(4r-3)\rfloor=2r-2$.
  
  If $\dist_{D^*_{r,t}}(\{u_1,u_2\},v_1) \leq 2r-2$, then $\dist_{D_{r,t}}(A,B) \leq 4r-1$
  by Lemma~\ref{lem:diameter-dual}.
  If $\dist_{D^*_{r,t}}(\{u_1,u_2\},v_1')\leq 2r-2$, then $\dist_{D_{r,t}}(A,B') \leq 4r-1$
  by Lemma~\ref{lem:diameter-dual} again, but then clearly $\dist_{D_{r,t}}(A,B) \leq 4r$.
  This completes the proof.
\end{proof}

\bibliographystyle{plainurl}

\end{document}